\documentclass{article}
\usepackage{graphicx} 
\usepackage{bbm}
\usepackage{bm}
\usepackage{geometry}
\usepackage{color}
\usepackage{amsfonts}
\usepackage{algorithm}
\usepackage{algorithmic}
\usepackage{latexsym, amssymb, amsmath, amscd, amsthm, amsxtra}
\usepackage{mathtools}
\usepackage{enumerate}
\usepackage[all]{xy}
\usepackage{mathrsfs}
\usepackage{fancyhdr}
\usepackage{listings}
\usepackage{hyperref}
\hypersetup{colorlinks=true,linkcolor=blue}
\usepackage{enumitem}
\usepackage{multirow}
\usepackage{tikz}

\def\11{\mathbbm{1}}

\newtheorem*{disclosure}{Disclosure}
\newtheorem*{ackn}{Acknowledgment}
\newtheorem{thm}{Theorem}[section]

\newtheorem{conjecture}[thm]{Conjecture}
\newtheorem{lemma}[thm]{Lemma}

\newtheorem{remark}[thm]{Remark}

\title{On Injectivity of Phase Retrieval}

\usepackage{authblk}
\author[1]{Zhangsong Li}
\affil[1]{School of Mathematical Sciences, Peking University}
\date{\today}

\begin{document}

\maketitle

\begin{abstract}
    In this short note, we prove that if $A \in \mathbb C^{N \times M}$ with $N=4M-5$ has i.i.d.\ standard complex Gaussian entries, then the probability that the phase retrieval map generated by $A$ is not injective is positive. This proves Part (1) of a conjecture of Cynthia Vinzant, which was later restated by Afonso S. Bandeira in \cite{BDL+26}. The main result of this paper was obtained using generative AI, in particular the Rethlas system.
\end{abstract}

\section{Introduction}{\label{sec:intro}}

Let $N,M$ be two integers. Given a complex matrix $A \in \mathbb C^{N\times M}$, the phase retrieval problem aims to recover a vector $x \in \mathbb C^{M}$ from $|Ax|$, where $|\cdot|$ is the entrywise modulus. In other words, one has access to the modulus of $N$ linear measurements, and the goal is to recover $x \in \mathbb C^M$. Let $\mathbb T$ be the unit circle in $\mathbb C$, it is clear that one can only hope to recover $x$ modulo $\mathbb T$. More precisely, let $\mathbb C^M (\operatorname{mod} \mathbb T)$ be $\mathbb C^M$ modulo $\mathbb T$ (meaning that $x=(x_1,\ldots,x_M) \in \mathbb C^M$ and $y=(y_1,\ldots,y_M) \in \mathbb C^M$ are equivalent if and only if there exists $|z|=1$ such that $x=zy$), we say $A$ is injective for phase retrieval if the map
\begin{align}{\label{eq-phase-retrieval-map}}
    \Phi_A: x \in \mathbb C^M (\operatorname{mod} \mathbb T) \to |Ax|
\end{align}
is injective.

In \cite{BCMN14}, the authors conjectured that $N \geq 4M-4$ linear measurements were necessary for injectivity over $\mathbb C$, and that generic $4M-4$ measurements were injective. While the second part of this conjecture was proven in \cite{CEHV15}, the first was shown to be false when Cynthia Vinzant found an injective set of $11$ linear measurements in $\mathbb C^4$ \cite{Vin15}. As a remedy, a refined version of this conjecture was later posed by Cynthia Vinzant (and stated again in \cite{BDL+26}), which we state as follows.
\begin{conjecture}{\label{main-conjecture}}
    Let $N=4M-5$ and let $A \in \mathbb C^{N\times M}$ be a matrix whose entries are drawn i.i.d.\ from the standard complex Gaussian distribution (a complex standard Gaussian has the law of $a+ib$ where $a,b \sim \mathcal N(0,\frac{1}{2})$ are independent). Let $p_M$ be the probability that the mapping $\Phi_A$ defined in \eqref{eq-phase-retrieval-map} is injective. We then have:
    \begin{enumerate}
        \item[(1)] $p_M<1$ for all $M$.
        \item[(2)] $\lim_{M \to\infty} p_M=0$.
    \end{enumerate}
\end{conjecture}
The goal of this short note is to prove Part~(1) of Conjecture~\ref{main-conjecture}.
\begin{thm}{\label{MAIN-THM}}
    Let $M \geq 2$ and $N=4M-5$. Then, there is a nonempty open set $U \subset \mathbb C^{N\times M}$ such that every $A \in U$ is not injective for phase retrieval. In particular, this implies Part~(1) of Conjecture~\ref{main-conjecture}.
\end{thm}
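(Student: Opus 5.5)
The plan is to realize non-injectivity as the existence of a real solution of a square polynomial system, and then to show that one such solution is nondegenerate, so that it survives small perturbations of $A$. Write $a_1^*,\dots,a_N^*$ for the rows of $A$. The map $\Phi_A$ fails to be injective exactly when there are $x,y\in\mathbb C^M$, not equivalent modulo $\mathbb T$, with $|a_i^*x|=|a_i^*y|$ for all $i$. Equivalently, the nonzero Hermitian matrix $Q=xx^*-yy^*$ satisfies
\begin{align*}
a_i^*Qa_i=0, \qquad i=1,\dots,N .
\end{align*}
Let $\mathcal R\subset \mathrm{Herm}_M$ be the set of Hermitian matrices of rank $2$ and signature $(1,1)$. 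It is a smooth real manifold of dimension $4M-4$, since $(x,y)\in\mathbb C^{2M}$ modulo the $4$-dimensional group $U(1,1)$ gives $4M-4$. The equations are homogeneous in $Q$, so we normalize, for example by $\operatorname{tr}(Q_0 Q)=\mathrm{const}$, and obtain a $(4M-5)$-dimensional manifold $\mathcal R_1$. The system is then $N=4M-5$ real equations in $4M-5$ real unknowns, which is exactly the square case.

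Define $F:\mathcal R_1\times\mathbb C^{N\times M}\to\mathbb R^N$ by $F(Q,A)=(a_i^*Qa_i)_{i=1}^N$. Suppose we find $(Q_0,A_0)$ with $F(Q_0,A_0)=0$ such that the partial differential $D_QF(Q_0,A_0):T_{Q_0}\mathcal R_1\to\mathbb R^N$ is bijective. The implicit function theorem then gives a neighborhood $U$ of $A_0$ and a continuous family $Q(A)\in\mathcal R_1$ with $F(Q(A),A)=0$. Each $Q(A)$ factors as $xx^*-yy^*$ with $x,y$ linearly independent, so these vectors are not equivalent modulo $\mathbb T$. Hence every $A\in U$ is non-injective. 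Since the Gaussian density is positive on the open set $U$, it follows that $\mathbb P(A\in U)>0$ and $p_M<1$.

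For the construction, take $Q_0=e_1e_1^*-e_2e_2^*$. The constraint becomes $|a_{i1}|=|a_{i2}|$ for every row. Tangent vectors to $\mathcal R$ at $Q_0$ have the form $H=e_1u^*+ue_1^*-e_2v^*-ve_2^*$ with $u,v\in\mathbb C^M$, and the linearized equations are
\begin{align*}
\operatorname{Re}\bigl(\overline{a_{i1}}\,(a_i^*u)\bigr)-\operatorname{Re}\bigl(\overline{a_{i2}}\,(a_i^*v)\bigr)=0 .
\end{align*}
These are real-linear in $(u,v)$ and vanish identically on the $4$-dimensional gauge directions coming from $\mathfrak u(1,1)$. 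One of those gauge directions, $H=Q_0$ itself, is killed by the normalization. The claim to prove is that, for a suitable choice of rows with $|a_{i1}|=|a_{i2}|$, the kernel on $T_{Q_0}\mathcal R$ is exactly the line spanned by $Q_0$. Equivalently, the $N\times(4M-4)$ real matrix of the linearization has rank $4M-5$.

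Full rank is an open condition that is real-analytic in the constrained rows, so it suffices to exhibit one example. This is the main obstacle. I would first try an inductive block construction on $M$. For $M=2$, $N=3$, I would pick explicit rows such as $(1,1)$, $(1,i)$, $(1,-1)$ and check the $3\times 4$ matrix by hand. To pass from $M$ to $M+1$, I would add four rows that involve the new coordinate, so that the new unknowns $u_{M+1},v_{M+1}$ (four real parameters) are determined in a block-triangular fashion. If that bookkeeping fails, the fallback is to choose the first two columns with unit modulus and random phases, and the remaining columns generic. I would then argue by a dimension count that the kernel generically has the minimal size, reducing to a rank computation that can be certified for small $M$ and propagated by the block structure.
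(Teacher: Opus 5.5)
Your reduction is correct and follows the paper's route. The paper also takes $Q_0=\mathrm{diag}(1,-1,0,\dots,0)$ and works on a $(4M-5)$-dimensional normalized neighborhood of $Q_0$ in the rank-$2$ Hermitian matrices; it uses an explicit Schur-complement chart $Q(s,b,z,t)$ where you use the abstract $\mathcal R_1$. It then applies the implicit function theorem at a point with invertible Jacobian. Your direct $xx^*-yy^*$ argument is a fine substitute for Lemma~\ref{lem-equivalent-formulation}, since only its trivial direction is needed.

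The genuine gap is that you stop exactly where the content of the theorem lies: you never exhibit a nondegenerate $A_0$. The base case is only announced (``I would \dots\ check''). For the inductive step you do not say which four rows to add, or why their $4\times 4$ block is invertible. The fallback does not close this. With $N=4M-5$ equations on a $(4M-4)$-dimensional tangent space, a dimension count only bounds the rank above by $4M-5$. Showing that rank $4M-5$ is attained (and hence holds generically) still needs one explicit witness, which is the very problem you were trying to avoid.

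The missing step is short with your own linearization. The rows $(1,1,\mathbf 0),(1,-1,\mathbf 0),(1,i,\mathbf 0)$ see only the top-left $2\times 2$ block $h$ of $H\in T_{Q_0}\mathcal R$. They give the functionals $h_{11}+h_{22}+2\operatorname{Re}h_{12}$, $h_{11}+h_{22}-2\operatorname{Re}h_{12}$ and $h_{11}+h_{22}+2\operatorname{Im}h_{12}$. These are independent, with common kernel $\mathbb R\,\mathrm{diag}(1,-1)$.

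For each coordinate $k\ge 3$, add the four rows with entries $1,\epsilon$ in the first two coordinates, $c$ in coordinate $k$, and $0$ elsewhere, for $\epsilon\in\{1,-1\}$ and $c\in\{1,i\}$. These rows satisfy $|a_{i1}|=|a_{i2}|$. In your formula the new unknowns enter only through $\operatorname{Re}(c(u_k-\epsilon v_k))$, that is, up to sign, through the real and imaginary parts of $u_k-v_k$ and $u_k+v_k$. So their $4\times 4$ block is invertible. All other rows vanish in coordinate $k$, so the Jacobian is block lower triangular with total rank $3+4(M-2)=4M-5$. This is precisely the paper's $A_0$.

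One minor slip to fix: $H=Q_0$ is not one of the $\mathfrak u(1,1)$ gauge directions, since those all map to $H=0$. It comes from the scaling $u=e_1/2$, $v=e_2/2$ and lies in the kernel by homogeneity. Your stated criterion, that the kernel on $T_{Q_0}\mathcal R$ equals $\mathbb R Q_0$, is nonetheless the right one.
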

\begin{remark}
    Note that the case $M=2,N=3$ has already been established in \cite{BCMN14}. Thus, in the later proof we will focus on $M \geq 3$. In addition, we note that the case $M=2^k+1$ was proved in \cite{CEHV15}. We would like to remark that it seems the construction in our work differs significantly from that in \cite{CEHV15}.
\end{remark}
\begin{disclosure}
    The main result of this work was obtained using generative AI by simply prompting Rethlas with Conjecture~\ref{main-conjecture}, without any additional mathematical input. The human contribution consisted of verifying the proofs generated by Rethlas, as well as editing and reorganizing the manuscript for clarity and style. We refer to \cite{JGJ+26} for a detailed introduction to the Rethlas system. Due to the limitations of generative AI, it is possible that we have overlooked some relevant references.
\end{disclosure}
\begin{ackn}
    The author would like to thank Leheng Chen, Zihao Liu, and Haocheng Ju for introducing the Rethlas system to him. The author also would like to thank the Rethlas team, namely Haocheng Ju, Guoxiong Gao, Jiedong Jiang, Bin Wu, Zeming Sun, Leheng Chen, Yutong Wang, Yuefeng Wang, Zichen Wang, Wanyi He, Peihao Wu, Liang Xiao, Ruochuan Liu, Bryan Dai, and Bin Dong, for their contributions to the development of Rethlas and its customized version used for the problem studied in this paper.  
\end{ackn}

\section{Proof of main results}{\label{sec:proof-main-thm}}

This section is devoted to the proof of Theorem~\ref{MAIN-THM}. We will first invoke a result in \cite{BCMN14} which provides a reformulation of the injectivity of the map $\Phi_A$ defined in \eqref{eq-phase-retrieval-map}.
\begin{lemma}[Lemma~9 in \cite{BCMN14}]{\label{lem-equivalent-formulation}}
    Let $A \in \mathbb C^{N \times M}$, and write its rows as $a_1^{*},\ldots,a_N^{*}$, where $a_j \in \mathbb C^{M}$ and $a_j^{*}$ is the conjugate transpose of $a_j$. Define the real-linear map $\mathcal L_A$ on Hermitian matrices $Q \in \mathbb C^{M \times M}$ such that
    \begin{align}{\label{eq-def-mathcal-L-A}}
        \mathcal L_A(Q) = (a_1^* Q a_1,\ldots,a_N^* Q a_N) \,.
    \end{align}
    Then the phase retrieval map $\Phi_A$ in \eqref{eq-phase-retrieval-map} is injective if and only if $\mathsf{ker}(\mathcal L_A)$ contains no nonzero Hermitian matrix of rank at most $2$.
\end{lemma}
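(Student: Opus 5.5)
The plan is to translate both sides of the equivalence through the identity $|a_j^* x|^2 = a_j^* (xx^*) a_j$, which is valid for every $x \in \mathbb C^M$. This identity says that $\Phi_A(x)$ is determined by, and entrywise the square root of, $\mathcal L_A(xx^*)$. Since the entries of $\Phi_A$ are nonnegative, $\Phi_A(x)=\Phi_A(y)$ holds if and only if $\mathcal L_A(xx^*-yy^*)=0$, using real-linearity of $\mathcal L_A$. The lemma then reduces to comparing differences $xx^*-yy^*$ with nonzero Hermitian matrices of rank at most $2$.

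\emph{(Non-injective $\Rightarrow$ kernel element.)} Suppose $x,y$ are inequivalent modulo $\mathbb T$ but $\Phi_A(x)=\Phi_A(y)$. Then $Q=xx^*-yy^*$ is Hermitian and has rank at most $2$, since its range lies in $\mathrm{span}\{x,y\}$, and it lies in $\mathsf{ker}(\mathcal L_A)$. It remains to check $Q\neq 0$. If $xx^*=yy^*$, then either both vectors vanish, or comparing ranges gives $x=cy$, and then $|c|^2=1$; in every case $x$ and $y$ are equivalent modulo $\mathbb T$, a contradiction.

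\emph{(Kernel element $\Rightarrow$ non-injective.)} Let $0\neq Q\in\mathsf{ker}(\mathcal L_A)$ be Hermitian with rank at most $2$. By the spectral theorem, $Q=\lambda_1 uu^*+\lambda_2 vv^*$ with $u,v$ orthonormal, real $\lambda_i$, and $\lambda_1\neq 0$. There are two cases.

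\textbf{Indefinite case.} If $\lambda_1>0>\lambda_2$, set $x=\sqrt{\lambda_1}\,u$ and $y=\sqrt{-\lambda_2}\,v$, so that $Q=xx^*-yy^*$ and $\Phi_A(x)=\Phi_A(y)$. The vectors $x,y$ are nonzero and orthogonal, so they cannot differ by a unimodular scalar.

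\textbf{Semidefinite case.} This covers rank one and same-sign eigenvalues; replacing $Q$ by $-Q$, we may assume $Q\succeq 0$. Then $a_j^*Qa_j=\|Q^{1/2}a_j\|^2=0$, so $Q^{1/2}a_j=0$ and hence each $a_j$ is orthogonal to the range of $Q$. Taking $x=u$ in the range and $y=0$ gives $\Phi_A(x)=0=\Phi_A(y)$ with $x\not\sim y$.

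The only mildly delicate point is this semidefinite case: there $Q$ is not literally of the form $xx^*-yy^*$ with both vectors nonzero, so one must use the positivity argument to produce the pair $(u,0)$ instead. Everything else is bookkeeping with the spectral decomposition.
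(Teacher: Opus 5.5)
Your proof is correct and complete. It uses the lift $|a_j^*x|^2=a_j^*(xx^*)a_j$ and the nonzero rank-$\le 2$ difference $xx^*-yy^*$ for one direction. For the converse it splits the spectral decomposition into an indefinite case, where $Q=xx^*-yy^*$ with $x\perp y$, and a semidefinite case, which gives $\Phi_A(u)=0=\Phi_A(0)$. The paper does not prove this lemma itself but cites Lemma~9 of \cite{BCMN14}, and your argument is essentially the standard proof from that reference.
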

Provided with Lemma~\ref{lem-equivalent-formulation}, to prove Theorem~\ref{MAIN-THM}, it suffices to prove the following result.
\begin{lemma}{\label{main-lem}}
    Let $M \geq 2$ and $N=4M-5$. Then, there is a nonempty open set $U \subset \mathbb C^{N\times M}$ such that for every $A \in U$, there exists a non-zero Hermitian matrix $Q(A) \in \mathbb C^{M\times M}$ with $\mathsf{rank}(Q(A)) \leq 2$ and $Q(A) \in \mathsf{ker}(\mathcal L_A)$.
\end{lemma}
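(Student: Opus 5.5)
The plan is to exhibit one matrix $A_0$ together with a rank-$2$ Hermitian $Q_0\in\mathsf{ker}(\mathcal L_{A_0})$ at which the equations $\mathcal L_A(Q)=0$ are \emph{transversal}. The implicit function theorem then propagates the solution to all $A$ near $A_0$.

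\textbf{Dimension count and setup.} Since $a_j^*Qa_j=0$ for all $j$ forces $Q$ to be indefinite, I will work on the manifold $\mathcal R\subset\{\text{Hermitian } M\times M\}$ of Hermitian matrices of rank exactly $2$ and signature $(1,1)$. This is a smooth real manifold of dimension $2M\cdot 2-2^2=4M-4$. I fix $Q_0=e_1e_1^*-e_2e_2^*$ and intersect $\mathcal R$ with the affine slice $\{Q:\mathrm{tr}(Q_0Q)=2\}$, which is transversal to the ray $\mathbb R_{>0}Q_0$ at $Q_0$. The result is a $(4M-5)$-dimensional manifold $\mathcal S$ near $Q_0$, none of whose points is zero. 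Consider the smooth map
\[
F:\mathbb C^{N\times M}\times \mathcal S\to\mathbb R^{N},\qquad F(A,Q)=\mathcal L_A(Q).
\]
Since $N=4M-5=\dim\mathcal S$, it suffices to find $A_0$ with $F(A_0,Q_0)=0$ and $D_QF(A_0,Q_0)$ invertible. The implicit function theorem then gives, on an open neighbourhood $U$ of $A_0$, a continuous $Q(A)\in\mathcal S$ with $\mathcal L_A(Q(A))=0$. Each such $Q(A)$ is nonzero with rank $2$, which proves Lemma~\ref{main-lem}.

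\textbf{Tangent space and the key spanning claim.} At $Q_0$ the tangent space is $T=T_{Q_0}\mathcal R=\{H \text{ Hermitian}: H_{kl}=0 \text{ for } k,l\ge 3\}$. This consists of an arbitrary $2\times 2$ Hermitian block (real dimension $4$) plus arbitrary complex entries $H_{kl}$ with $k\in\{1,2\}$, $l\ge3$ (real dimension $4(M-2)$), for a total of $4M-4$. The tangent space of $\mathcal S$ is $T_0=\{H\in T:\mathrm{tr}(Q_0H)=0\}$, and $Q_0\notin T_0$.

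Let $\Sigma=\{a\in\mathbb C^M:|a_1|=|a_2|\}$, which is exactly the set of $a$ with $a^*Q_0a=0$. The key claim is the following: the linear functionals $\ell_a(H)=a^*Ha$, $a\in\Sigma$, restricted to $T$, have common kernel exactly $\mathbb RQ_0$.

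To prove the claim, let $H\in T$ satisfy $a^*Ha=0$ for all $a\in\Sigma$. Write $a=(1,e^{i\theta},w)$ with $w\in\mathbb C^{M-2}$. Then
\[
a^*Ha=h(\theta)+2\,\mathrm{Re}\bigl((\bar 1,e^{-i\theta})\,H_{12,\ge3}\,w\bigr),
\]
where $h(\theta)$ comes from the $2\times2$ block $H_{\{1,2\}}$ and $H_{12,\ge3}$ is the $2\times(M-2)$ off-diagonal block of $H$ (the $(\ge 3,\ge 3)$ block vanishes because $H\in T$).
\begin{itemize}
\item Taking $w=0$ gives $h(\theta)=H_{11}+H_{22}+2\mathrm{Re}(H_{12}e^{i\theta})\equiv0$ for all $\theta$. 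Hence $H_{12}=0$ and $H_{22}=-H_{11}$, so the $2\times 2$ block is proportional to $\mathrm{diag}(1,-1)$.
\item Since $w$ is arbitrary, the term that is real-linear in $w$ must vanish identically. This gives $H_{1l}+e^{-i\theta}H_{2l}=0$ for all $\theta$ and all $l\ge 3$, hence $H_{1l}=H_{2l}=0$.
\end{itemize}
So $H\in\mathbb RQ_0$. Consequently the functionals $\{\ell_a|_{T}:a\in\Sigma\}$ span the annihilator of $\mathbb RQ_0$, which has dimension $4M-5$. Because $T=T_0\oplus\mathbb RQ_0$, their restrictions to $T_0$ span all of $T_0^*$. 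I can therefore pick $a_1,\dots,a_{4M-5}\in\Sigma$ whose functionals form a basis of $T_0^*$; generic choices on the real-analytic hypersurface $\Sigma$ work. Let $A_0$ be the matrix with rows $a_j^*$. Then $\mathcal L_{A_0}(Q_0)=0$ because each $a_j\in\Sigma$. Moreover $D_QF(A_0,Q_0)=(\ell_{a_j}|_{T_0})_j$, which is invertible.

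\textbf{Main obstacle.} The only genuinely nontrivial step is the spanning claim: one must check that no nonzero tangent direction other than $Q_0$ itself is invisible to every admissible measurement vector. The Fourier-in-$\theta$ argument above is how I expect to handle it. The remaining steps are routine: smoothness of $\mathcal R$, the dimension formula, and transversality of the slice.
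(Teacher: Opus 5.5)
Your proposal is correct and follows the paper's proof: the implicit function theorem at $Q_0=\mathrm{diag}(1,-1,0,\dots,0)$ on a $(4M-5)$-dimensional normalized slice of rank-$2$ Hermitian matrices. The paper's Schur-complement chart, with diagonal block $D(s,b)$, satisfies $Q_{11}-Q_{22}=2$, which is exactly your slice $\mathrm{tr}(Q_0Q)=2$. The only difference is that the paper writes down an explicit $A_0$ and checks the Jacobian by hand, whereas you obtain $A_0$ existentially from your spanning claim; the paper's rows $(1,\pm1,\mathbf 0)$, $(1,i,\mathbf 0)$, $(1,\pm1,\mathbf e_\ell)$, $(1,\pm1,i\mathbf e_\ell)$ are a concrete instance of your Fourier-in-$\theta$ argument.
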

\begin{proof}
We will first construct a pair $(A_0,Q_0)$ such that $A_0 \in \mathbb C^{N\times M}$, $Q_0 \in \mathbb C^{M\times M}$ is rank-$2$ and Hermitian, and $Q_0 \in \mathsf{ker}(\mathcal L_{A_0})$. Then, we will generalize our construction to an open neighborhood of $A_0$ via the implicit function theorem.

We first construct $(A_0,Q_0)$. Define
\begin{equation}{\label{eq-def-Q-0}}
    Q_0 = \mathsf{diag}(1,-1,0,\ldots,0) \in \mathbb C^{M \times M} \,.
\end{equation}
It is clear that $Q_0$ is rank-$2$ and Hermitian. In addition, let $\mathbf e_1,\ldots,\mathbf e_{M-2}$ denote the standard basis of $\mathbb C^{M-2}$, and let $\mathbf 0$ denote the zero vector in $\mathbb C^{M-2}$. Define $A_0 \in \mathbb C^{N\times M}$ such that the rows of $A_0$ are given by
\begin{equation}{\label{eq-def-A-0}}
    \begin{aligned}
        &\Big\{ (1,1,\mathbf 0), \quad (1,-1,\mathbf 0), \quad (1,i,\mathbf 0) \Big\} \mbox{ and } \\
        &\Big\{ (1,1,\mathbf e_\ell), \quad (1,1,i\mathbf e_\ell), \quad (1,-1,\mathbf e_\ell), \quad (1,-1,i\mathbf e_\ell): 1 \leq \ell \leq M-2 \Big\} \,.
    \end{aligned}
\end{equation}
It is clear that for every row $a\in \mathbb C^M$ of $A_0$, we have $a^* Q_0 a=|a_1|^2-|a_2|^2=0$. Thus, $Q_0 \in \mathsf{ker}(\mathcal L_{A_0})$.

We now extend our construction to a small open neighborhood of $A_0$. For $s \in \mathbb R, b \in \mathbb C$ and $z,t \in \mathbb C^{M-2}$, define
\begin{equation}{\label{eq-def-matrix-D-C}}
    D(s,b)=
    \begin{pmatrix}
        1+\frac{s}{2} & b \\
        \overline{b} & -1+\frac{s}{2}
    \end{pmatrix}, \quad
    C(z,t)=
    \begin{pmatrix}
        z_1 & t_1 \\
        \vdots & \vdots \\
        z_{M-2} & t_{M-2} 
    \end{pmatrix} \,.
\end{equation}
For $|s|,|b|,|z|,|t|$ sufficiently small, $D(s,b)$ is invertible, and thus
\begin{equation}{\label{eq-def-Q(s,b,z,t)}}
    Q(s,b,z,t) = 
    \begin{pmatrix}
        D(s,b) & C(z,t)^* \\
        C(z,t) & C(z,t) D(s,b)^{-1} C(z,t)^*
    \end{pmatrix} \,.
\end{equation}
is an Hermitian matrix of rank $2$, and also $Q(0,0,\mathbf 0,\mathbf 0)=Q_0$. For row vector $a=(u,v,w) \in \mathbb C^M$ (we let $u,v \in \mathbb C$ and $w \in \mathbb C^{M-2}$), consider
\begin{align*}
    F_a(s,b,z,t) = a^* Q(s,b,z,t) a \,.
\end{align*}
Now, for any $A \in \mathbb C^{N \times M}$, let $a_1,\ldots,a_{N}$ denote the rows of $A$, and define
\begin{align*}
    \Psi: \mathbb C^{N\times M} \times \mathbb R \times \mathbb C \times \mathbb C^{M-2} \times \mathbb C^{M-2} \to \mathbb R^{4M-5}, \quad \Psi(A,s,b,z,t)= ( F_{a_i}(s,b,z,t) )_{1 \leq i \leq N} \,.
\end{align*}
It is clear that both spaces $\mathbb R \times \mathbb C \times \mathbb C^{M-2} \times \mathbb C^{M-2}$ and $\mathbb R^{4M-5}$ have real dimension $N=4M-5$. In addition, we have $\Psi(A_0,0,0,\mathbf 0,\mathbf 0)=0$. We now argue that 
\begin{align}{\label{eq-diff-form-invertible}}
    \mathsf{D}_{s,b,z,t}\Psi(A_0,0,0,\mathbf 0,\mathbf 0) \mbox{ is invertible} \,.
\end{align}
Note that the lower-right block $CD^{-1}C^*$ has no linear term. Thus, for $a=(u,v,w)$ where $|u|=|v|=1$ and for $|s|,|b|,|z|,|t|$ sufficiently small, we have
\begin{align*}
    F_a(s,b,z,t) = s + 2\mathsf{Re}(\overline{u}bv) + 2\mathsf{Re}(\overline{u} z^* w + \overline{v} t^* w) + O\left( |s|^2+|b|^2+|z|^2+|t|^2 \right) \,.
\end{align*}
Plugging \eqref{eq-def-A-0} into the above formula, we have 
\begin{align*}
    \Psi(A_0,s,b,z,t) = 
    \begin{pmatrix}
        s+2\mathsf{Re}(b) \\
        s-2\mathsf{Re}(b) \\
        s-2\mathsf{Im}(b) \\
        s+2\mathsf{Re}(b)+2\mathsf{Re}(z_1+t_1) \\
        s+2\mathsf{Re}(b)+2\mathsf{Im}(z_1+t_1) \\
        s-2\mathsf{Re}(b)+2\mathsf{Re}(z_1-t_1) \\
        s-2\mathsf{Re}(b)+2\mathsf{Im}(z_1-t_1) \\
        \vdots \\
        s+2\mathsf{Re}(b)+2\mathsf{Re}(z_{M-2}+t_{M-2}) \\
        s+2\mathsf{Re}(b)+2\mathsf{Im}(z_{M-2}+t_{M-2}) \\
        s-2\mathsf{Re}(b)+2\mathsf{Re}(z_{M-2}-t_{M-2}) \\
        s-2\mathsf{Re}(b)+2\mathsf{Im}(z_{M-2}-t_{M-2}) \\
    \end{pmatrix}
    + O\left( |s|^2+|b|^2+|z|^2+|t|^2 \right) \,.
\end{align*}
Thus, it is clear that \eqref{eq-diff-form-invertible} holds. Therefore, by the real implicit function theorem, for all $A$ in some open neighborhood $U$ of $A_0$, there are small parameters $(s(A),b(A),z(A),t(A))$ such that
\begin{align*}
    \Psi(A,s(A),b(A),z(A),t(A))=(0,\ldots,0) \,.
\end{align*}
Thus, the corresponding $Q(A)=Q(s(A),b(A),z(A),t(A))$ is rank-$2$, Hermitian, and satisfies $Q(A) \in \mathsf{ker}(\mathcal L_A)$.
\end{proof}

\bibliographystyle{alpha}

\end{document}